\documentclass[12pt,a4paper]{article}
\usepackage[utf8]{inputenc}
\usepackage[T1]{fontenc}
\usepackage[english]{babel}
\usepackage{amssymb,color,wrapfig, float}
\usepackage{mathtools}
\usepackage{indentfirst}
\usepackage{graphicx}
\usepackage{amsmath}
\usepackage{url}
\usepackage{indentfirst}
\usepackage{misccorr}
\usepackage{amsmath,amssymb,url,xcolor}
\usepackage{scrextend}
\usepackage[T2A]{fontenc}
\usepackage{mathtools}
\usepackage{amsthm}
\usepackage[colorlinks=true,linkcolor=blue,urlcolor=red,unicode=true,hyperfootnotes=false,bookmarksnumbered]{hyperref}

\usepackage[letterpaper,top=2cm,bottom=2cm,left=3cm,right=3cm,marginparwidth=1.75cm]{geometry}

\newcommand{\M}{\mathcal{M}}
\newcommand{\T}{\mathcal{T}}

\DeclareMathOperator{\supp}{supp}

\newtheorem{theorem}{Theorem}
\newtheorem{lemma}[theorem]{Lemma}

\newtheorem{definition}{Definition}
\newtheorem{claim}[theorem]{Claim}

\title{On the maximum number of vectors in $\{0,\pm1\}^n$ with forbidden inner products}

\author{Ilya Lobatskii\thanks{Moscow Institute of Physics and Technology; E-mail: \url{ lobzik22848@gmail.com}}, Yakov Shubin\thanks{Moscow Institute of Physics and Technology;
E-mail: \url{shubin.yakoff@gmail.com}}}

\begin{document}
\maketitle
\nocite{*}

\newenvironment{Proof}
{\par\noindent{\bf Proof.}}
{\hfill$\scriptstyle\blacksquare$}

\begin{abstract}
    Let $M \subset \{0,\pm1\}^n$ be a set such that $(m,m)=4$ for every $m\in M$, and $(m_1,m_2)\in\{-4,-3,-2,-1,0,3\}$ for any two distinct vectors $m_1,m_2\in M$. We determine the maximum possible cardinality of such a set $M$ for all sufficiently large $n$.
\end{abstract}

\textbf{Keywords:} extremal set theory, vectors in $\{0,\pm1\}^n$.

\section{Introduction}

In this paper, we consider the problem of finding the maximum number of vectors in $\{0,\pm 1\}^n$ with a fixed number of nonzero coordinates and whose pairwise inner products take values in a set $L$. We first recall the relevant background.

A substantial body of work has been devoted to the analogous problem for vectors in $\{0,1\}^n$. In this setting, vectors can be identified with subsets of an $n$-element set, while the values of inner products can be identified with the cardinalities of intersections of the corresponding subsets. One of the fundamental results of this kind is the Erd\H{o}s--Ko--Rado theorem (see \cite{EKR}), which determines the largest number of vectors in $\{0,1\}^n$ with $k$ nonzero coordinates and pairwise positive inner products, that is, with the zero inner product forbidden. This theorem has became the foundation of an entire area of extremal set theory.

A related problem, usually referred to as the Erd\H{o}s--S\'os problem, asks for the maximum size in the case where exactly one value of the inner product, denoted by $t$, is forbidden; see \cite{ES}. This problem remains open in its general form. In 1985, Frankl and F\"uredi proved a breakthrough result, solving it in the case $n > n_0(k,t)$, $k > 2t$ (see \cite{FF}). In recent years, a number of results have also been obtained for some special cases of the Erd\H{o}s--S\'os problem in \cite{Linz}, \cite{KZ}, \cite{EKL}, and \cite{los}.

All such problems can be formulated in terms of finding the independence number of a certain graph. Recall that a subset of vertices of a graph $G$ is called independent if no two of its vertices are joined by an edge. The size of the largest independent subset in a graph $G$ is called its \textit{independence number} and is denoted by $\alpha(G)$. Define the graph $G(n,k,t)$ whose vertices are all possible $k$-element subsets of the set $[n]=\{1,2, \ldots, n\}$, with an edge between two vertices whenever the intersection of the corresponding subsets has exactly $t$ elements.

The graphs $G(n,k,t)$ are important in extremal set theory, combinatorial geometry, and the study of codes with forbidden distances. We give several examples. In 1981, Frankl and Wilson used an estimate for the independence number of the graph $G(4p,2p,p)$ to show that the chromatic number of Euclidean space grows exponentially with the dimension (see \cite{FW}). In 1991, Kahn and Kalai used the results of Frankl and Wilson to disprove the classical Borsuk conjecture that every bounded set in $\mathbb{R}^{n}$ with more than one point can be partitioned into $n+1$ parts of smaller diameter (see \cite{KK}, \cite{RB}, \cite{RB2}). A more detailed survey can be found in \cite{Gnkt}.

We now pass to a more general formulation in which more than one inner product may be forbidden. Deza, Erd\H{o}s, and Frankl~\cite{DEF} initiated the study of $(n,k,L)$-systems, that is, families of $k$-element subsets of an $n$-element set in which all pairwise intersection sizes belong to the set $L$. It is easy to see that the Erd\H{o}s--S\'os problem is a special case of this problem for $L=\{0,1,\ldots,k\} \setminus \{t\}$. Let $m(n,k,L)$ denote the maximum possible number of sets in an $(n,k,L)$-system (see \cite{KupSur}). The problem of determining $m(n,k,L)$ in general appears to be extremely difficult. A number of papers address various special cases of this problem (for example, see \cite{AK}, \cite{DES}, \cite{Fur}). For instance, in \cite{Fr}, Frankl found the order of growth of $m(n,k,L)$ for all $k \leqslant 7$ and all values of $L$, except for the cases $k=7$, $L=\{0,2,3,5\}$ and $L=\{0,2,3,5,6\}$.

We now turn to problems concerning sets of vectors in $\{0,\pm 1\}^n$ with forbidden inner products. Several papers have been devoted to the study of such sets (for example, see \cite{CK}, \cite{PR}, \cite{ABR}, \cite{LR}, \cite{FK}, \cite{FK2}). For such sets, there is also an analogue of the Erd\H{o}s--S\'os problem: one seeks the maximum number of vectors in $\{0,\pm 1\}^n$ with $k$ nonzero coordinates in which one value $t$ of the pairwise inner products is forbidden. This problem was solved in \cite{CK} for $t=0$, and also for odd negative $t$ in the regime $n>n_0(k,t)$. For negative even $t$, the exact answer is unknown in general: Frankl and Kupavskii obtained \cite{FKanti}, an asymptotic result was obtained for all negative $t$.

Passing from vectors in $\{0,1\}^n$ to vectors in $\{0,\pm 1\}^n$ made it possible to improve the lower bound in the well-known Nelson--Erd\H{o}s--Hadwiger problem (see \cite{Rchrom}) and also in Borsuk's problem (see \cite{RB3}).

The following theorem was proved in \cite{CKR}.

\begin{theorem}\label{theoremCKR}
    Let $n$ be a positive integer. Define $c(n)$ as follows:
    $$ c(n)=\left\{\begin{array}{ll}
0 & \text { if } n \equiv 0 \\
1 & \text { if } n \equiv 1 \\
2 & \text { if } n \equiv 2 \text { or } 3
\end{array} \quad(\bmod \ 4) .\right.
$$
Let $\mathcal{V} \subset \{0,\pm 1\}^n$ be a set such that $(v,v)=3$ for every $v \in \mathcal{V}$, and $(v_1,v_2) \neq 0$ for any two distinct vectors $v_1,v_2 \in \mathcal{V}$. Then
$$|\mathcal{V}| \leqslant \max\{6n-28,4n-4c(n)\}.$$
\end{theorem}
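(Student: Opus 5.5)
Write $\supp(v)$ for the $3$-element support of $v\in\mathcal V$. If $\supp(v_1)\cap\supp(v_2)=\emptyset$ then $(v_1,v_2)=0$, which is forbidden. So the supports form an intersecting family of $3$-subsets of $[n]$, and we may apply the Hilton--Milner type structure theory for intersecting families of triples. Two vectors with the same support and sign patterns $\varepsilon,\varepsilon'\in\{\pm1\}^3$ have inner product in $\{\pm1,\pm3\}$, which is never $0$. Hence every support can carry all $8$ sign patterns, subject only to constraints from other supports. If two supports meet in exactly two coordinates, the inner product lies in $\{-2,0,2\}$, and it is $0$ exactly when the vectors agree on one shared coordinate and disagree on the other. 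If two supports meet in exactly one coordinate, the inner product is $\pm1$, so there is no constraint.

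\textbf{Case 1: there is a common coordinate.} Suppose all supports contain a fixed element, say $1$. Let $\mathcal V_1$ be the vectors whose supports meet the others only in $\{1\}$. Then all pairwise constraints come from pairs of supports $\{1,a,b\}$ and $\{1,a,c\}$ sharing two elements. For each pair $\{a,b\}\subset[n]\setminus\{1\}$, record the set $S_{ab}\subseteq\{\pm1\}^3$ of sign patterns used on $\{1,a,b\}$. This gives a weighted "link graph" on $[n]\setminus\{1\}$ whose edges carry sign-pattern sets, and adjacent edges impose compatibility conditions. I would bound the total by a local weighting argument: assign each vector to its two non-central coordinates and show that each vertex $a$ contributes at most a bounded amount. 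If the link graph is a matching, every vector is unconstrained, giving $8\cdot\lfloor (n-1)/2\rfloor\approx 4n$. When edges share vertices, the compatibility constraints (on the common coordinates $1$ and $a$, patterns must pairwise avoid "agree on one, disagree on the other") force the pattern sets to shrink. On coordinates $(1,a)$, the allowed projections must form a set in which every two pairs are equal or opposite, i.e.\ at most two projections $\{p,-p\}$, so each such edge carries at most $4$ patterns. This yields roughly $4$ per edge in a star, about $4(n-1)$, and the exact optimization over graphs with vertex degrees gives $4n-4c(n)$, with the residue $c(n)$ coming from parity and packing of components (triangles, matchings, $K_4$'s on $n-1$ points).

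\textbf{Case 2: the support family has no common element.} By Hilton--Milner/Frankl structure for $k=3$, the family is contained in one of a few explicit configurations, each of size $O(n)$: for example, all triples meeting a fixed triple $T$ in at least two points, or the Hilton--Milner family $\{F\ni 1: F\cap T\neq\emptyset\}\cup\{T\}$, together with finitely many further types. For each configuration I would again bound the number of sign patterns per support using the two-coordinate compatibility rule. For the family of triples meeting $T=\{1,2,3\}$ in two points, each pair in $T$ generates about $n-3$ supports, all pairwise sharing that pair. By the argument above, each support carries at most $2$ patterns per projection class, giving about $2(n-3)$ per pair and $6n-18$ in total. Cross-constraints between different pairs of $T$ reduce this further, which should account for the constant $28$. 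The other types have a bounded number of supports of "high degree" and contribute at most $4n+O(1)$ or less.

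\textbf{Main obstacle.} The hard part will be Case 1's exact optimization yielding precisely $4n-4c(n)$, and the exact constant $28$ in Case 2. These need careful small-case analysis of how pattern sets on adjacent edges interact. I expect to handle them by a discharging argument over vertices of the link graph, where the extremal structures are disjoint triangles or $K_4$'s whose pattern sets are chosen consistently.
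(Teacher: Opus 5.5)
\textbf{The statement as printed is false.} With the hypothesis $(v_1,v_2)\neq 0$, no argument can establish it, and your Case~1 is where the reasoning breaks. Your local observation is correct: if a vertex $a$ of the link graph has degree at least $2$, all edges through $a$ project onto coordinates $(1,a)$ inside a single pair $\{p_a,-p_a\}$. But this only caps each edge at $2$ sign patterns. Nothing caps the number of edges, so the claim that ``each vertex contributes a bounded amount'' has no support.

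Concretely, let $\chi_A=\sum_{i\in A}e_i$ and take
\[
\mathcal V=\{\pm\chi_A : A\subset[n],\ |A|=3,\ 1\in A\}.
\]
Then $(\varepsilon\chi_A,\varepsilon'\chi_B)=\varepsilon\varepsilon'|A\cap B|\neq 0$, so $|\mathcal V|=(n-1)(n-2)$. This exceeds $\max\{6n-28,4n-4c(n)\}$ for every $n\geq 6$.

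Small $n$ fail as well:
\begin{itemize}
\item For $n=3$, all $8$ vectors of $\{\pm1\}^3$ work, against a bound of $4$.
\item For $n=7$, the Fano plane with all $8$ sign patterns on each line gives $56$ vectors, against a bound of $20$.
\item Your own matching count $8\lfloor (n-1)/2\rfloor$ equals $4n-4$ for odd $n$. It already beats the bound for $n=7$ and $n=11$, which contradicts your claim that the optimization yields $4n-4c(n)$.
\end{itemize}
Case~2 fails in the same way: $\{\pm\chi_A : |A\cap T|\geq 2\}$ has $6n-16$ elements. So the cross-constraints do not bring your $6n-18$ down to $6n-28$.

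\textbf{The intended theorem.} The result quoted from \cite{CKR} presumably forbids inner product $1$, not $0$. That is what makes it the analogue of Nagy's theorem, where intersection $1$ is forbidden. It also explains the term $4n-4c(n)$. Split the coordinates into blocks of four. In each block take all four triples, each carrying four sign patterns with pairwise inner product $-1$ (like $g_1,\dots,g_4$ in Section~3). Inner products are then $-1$ within a support, in $\{-2,0,2\}$ between supports of the same block, and $0$ between blocks. This gives $16$ vectors per block.

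In that version disjoint supports are allowed. So your opening reduction to an intersecting family is unavailable, and the Hilton--Milner case split goes with it. This paper does not reprove the theorem. It only says that \cite{CKR} handles $n\geq 14$ by analyzing snakes and cobras, and $n<14$ by computer search. That is the same kind of class-by-class position counting carried out in Section~4 here.
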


Theorem~\ref{theoremCKR} is analogous to a simpler theorem of Nagy~\cite{Na} for vectors in $\{0,1\}^n$ with inner product 1 forbidden. The proof of Theorem~\ref{theoremCKR} in the case $n \geqslant 14$ is based on the analysis of certain structures called snakes and cobras. For the cases $n<14$, Cherkashin, Kulikov, and Raigorodskii used a computer search, treating each value separately. The reason is that, for small values of $n$, the extremal example may be different, and there may also be several such examples. In the present paper we focus on the method of snakes, but for a more complicated problem; therefore, we assume $n$ is sufficiently large.

We consider families of vectors in $\{0,\pm 1\}^n$ with exactly 4 nonzero coordinates for which the inner products 1 and 2 are forbidden. It is worth noting that the forbidden values 1 and 2 are not arbitrary. Sets of vectors with 4 nonzero coordinates and these forbidden inner products were used in \cite{RS} as a counterexample to Borsuk's conjecture for $\ell_p$ metrics.

The main result of this paper is the following theorem.

\begin{theorem}\label{bigtheorem}
    Let $\mathcal{M} \subset \{0,\pm 1\}^n$ be a set such that $(m,m)=4$ for every $m \in \mathcal{M}$, and $(m_1,m_2) \in \{-4, -3, -2, -1, 0, 3\}$ for any two distinct vectors $m_1,m_2 \in \mathcal{M}$. Then there exists $n_0$ such that for all $n \geqslant n_0$ the maximum possible cardinality of $\mathcal{M}$ is equal to $8n-48$.
\end{theorem}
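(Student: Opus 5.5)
The proof has two parts: a construction of size $8n-48$, and a matching upper bound for large $n$. Throughout, $(m_1,m_2)=3$ for vectors of weight $4$ means the supports share exactly three coordinates, the signs agree on all three, and the fourth coordinates differ. Since $1$ and $2$ are the only forbidden values, two vectors whose supports meet must either be in this relation or have non‑positive inner product.

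\textbf{Construction.} Let $S=\{(1,1,1),(1,-1,-1),(-1,1,-1),(-1,-1,1)\}$; its elements have pairwise inner product $-1$. For $s\in S$ and $i\geqslant 7$ put
$$a_{s,i}=(s,0,0,0)+e_i,\qquad b_{s,i}=(0,0,0,s)-e_i,$$
where in $b_{s,i}$ the vector $s$ occupies coordinates $4,5,6$. This gives $8(n-6)=8n-48$ vectors, and the inner products are as follows.
\begin{itemize}
\item Same family, same $s$, $i\neq j$: the product is $3$.
\item Same family, different $s$: the product is $-1$ if $i\ne j$ and $-1+1=0$ if $i=j$.
\item Different families: the supports meet only in the tail coordinate, so the product is $0$ if $i\neq j$ and $-1$ if $i=j$.
\end{itemize}
Hence the family is admissible.

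\textbf{Upper bound, step 1 (clusters).} Take an admissible $\mathcal M$. Call $m_1,m_2$ \emph{linked} if $(m_1,m_2)=3$. A linked pair determines a signed core: a triple of coordinates with fixed signs. For a signed core $K$, let $\mathcal M_K$ be the vectors containing $K$. Any two of them have product $3$ or $3\pm1$, so their tails lie on pairwise distinct coordinates and $|\mathcal M_K|\leqslant n-3$.

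I would classify vectors by their linked‑degree.
\begin{itemize}
\item A vector lying in a cluster $\mathcal M_K$ with $|\mathcal M_K|\geqslant C$ ($C$ a large constant) is \emph{regular}.
\item Every other vector is \emph{sporadic}.
\end{itemize}
Using a snake/cobra‑type analysis in the spirit of Theorem~\ref{theoremCKR}, I want to show two things.
\begin{itemize}
\item The linked components of sporadic vectors have bounded size. A long chain of linked vectors whose cores keep changing forces a forbidden product $1$ or $2$ somewhere.
\item Sporadic vectors contribute only $o(n)$, or $O(1)$ per coordinate they block.
\end{itemize}

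\textbf{Step 2 (large clusters interact).} Take two large clusters with cores $K,K'$, tails $i$ and $j$, and tail signs $\varepsilon,\varepsilon'$.
\begin{itemize}
\item If $K\cap K'$ as supports is small, the products between the cores are small. Then two members with the same tail coordinate and the same tail sign have product at least $1$, which is forbidden. So per tail coordinate the clusters partition into sign classes, and within a sign class the cores must pairwise have product $\leqslant -1$.
\item In $\{\pm1\}^3$, or more generally among signed triples on a common support, at most $4$ vectors have pairwise product $\leqslant -1$. This follows from $\sum_{a<b}(s_a,s_b)\geqslant -\tfrac{3t}{2}$, which gives $t\leqslant 4$.
\item Cores on overlapping but different supports should be handled similarly, giving at most $4$ cores per sign class.
\end{itemize}
Hence each coordinate $i$ serves as a tail for at most $8$ regular vectors: $4$ with sign $+$ and $4$ with sign $-$.

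\textbf{Step 3 (counting).} Cores occupy coordinates, and those coordinates cannot simultaneously serve as free tails for other clusters. I will show that a configuration using more than two core‑supports loses at least as many tail coordinates as it gains, by charging $8$ per used core coordinate. This yields $|\mathcal M|\leqslant 8(n-6)+(\text{sporadic excess})$. Finally I show the excess is non‑positive: each sporadic vector destroys a tail slot of some regular cluster, and for $n\geqslant n_0$ this beats any $O(1)$ configuration.

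\textbf{Main obstacle.} I expect the main obstacle to be the stability step: showing that a near‑extremal family must consist of large clusters sharing exactly two core supports. Mixed cores of different shapes (cores overlapping in $1$ or $2$ coordinates, or using both tail signs) have to be ruled out with exact, not asymptotic, accounting. I would attempt this first by a weighting/discharging argument on coordinates, each carrying weight at most $8$, with the snake analysis bounding the defect.
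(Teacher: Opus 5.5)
Your construction is correct and coincides with the paper's example. The upper bound, however, is not proved. Steps 1 and 3 and the ``main obstacle'' are announced rather than argued, and the route you sketch cannot reach the exact constant. Showing that sporadic vectors contribute $o(n)$, or $O(1)$ per blocked coordinate, cannot distinguish $8n-48$ from $8n-40$; every non-regular structure needs a definite cost.

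The missing tool is an exact incidence count. Because $1$ and $2$ are forbidden, $(u,v)\geqslant 3$ is an equivalence relation. Its classes are single vectors, snakes (all members contain a common signed triple $h$), or $5$-clusters. For a signed position $a$, let $f(a)$ be the number of classes using $a$.
Your Gram computation, applied to one vector from each class through $a$ (with no size or support assumptions), gives $f(a)\leqslant 4$ for \emph{every} $a$. Double counting gives $\sum_a f(a)=\sum_K|p(K)|$, with $|p(K)|=|K|+3$ for snakes and single vectors and $|p(C)|=5\geqslant|C|$ for $5$-clusters. So each snake and each single vector costs exactly $3$, and $|\mathcal M|\geqslant 8n-48$ forces a bounded number of classes. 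This is what makes ``the rest is $O(1)$'' rigorous.

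Step 3 also rests on a false premise, namely that core coordinates cannot serve as tails. The heads $h_1=e_1+e_2+e_3$, $h_2=-e_1+e_4+e_5$, $h_3=-e_2-e_4+e_6$, $h_4=-e_3-e_5-e_6$ have pairwise inner product $-1$. Take the snakes $h_i+e_x$ ($x$ ranging over a common set of coordinates $\geqslant 7$) together with the extra vector $h_3+e_1$. This is an admissible family: the products of $h_3+e_1$ with $h_1+e_x$, $h_2+e_x$, $h_4+e_x$ are $0,-2,-1$. Yet coordinate $1$ lies in the cores of two snakes. Hence ``charging $8$ per used core coordinate'' is not a valid lower bound for the loss.

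The paper obtains the $48$ as follows. Pigeonhole over the boundedly many classes yields a signed position $a$ with $f(a)=f(-a)=4$, where every class through $\pm a$ is a snake of size at least $4$ with $a$ off the head. This forces two disjoint serpentaria. In a serpentarium the heads have pairwise product exactly $-1$, sum to $0$, and are orthogonal to every vector outside it. A signed position lying in one of the eight heads has $f\leqslant 3$, since two vectors of one class use it. A signed position lying in two heads has $f=2$, by these orthogonality relations. As the heads carry $24$ position incidences, $\sum_a f(a)\leqslant 8n-24$, while $\sum_a f(a)\geqslant|\mathcal M|+3k\geqslant|\mathcal M|+24$. Thus the $48$ is $24$ of head overhead plus $24$ of lost capacity on head positions.

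A minor point in Step 2: same-tail members of two large clusters with $(K,K')=2$ have the allowed product $3$. Excluding this needs the fact that the head of a snake with at least $4$ members has non-positive product with every vector outside the snake.
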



The rest of the paper is organized as follows. In Section 2 we introduce the definitions needed for the proof and formulate several simple statements. In Section 3 we prove the lower bound in Theorem~\ref{bigtheorem}, that is, we present a suitable set of $8n-48$ vectors. Section 4 is devoted entirely to the proof of the upper bound in Theorem~\ref{bigtheorem}.

\section{Notation and auxiliary statements}\label{sec2}

Recall that, for a positive integer $k$, $[k]$ denotes the set $\{1,\ldots,k\}$. For a vector \(v\in\{0,\pm1\}^n\), we use the standard notation $\supp(v):=\{i\in[n]:v_i\ne0\}$.

We introduce the following notation for the set of vectors with exactly one nonzero coordinate:
$$ P = P_n \coloneqq \bigl\{v \in \{0, \pm 1\}^n \bigm| (v, v) = 1 \bigr\}. $$
We refer to the vectors in $P$ as \textit{positions}.

For a vector $v \in \{0,\pm1\}^n$, define $p(v) \subset P$ to be the set of positions that occur in this vector, that is,
$$p(v) \coloneqq \bigl\{a \in P \bigm| (v, a) = 1 \bigr\}.$$
For a set of vectors $F \subset \{0,\pm 1\}^n$, denote
$$\ p(F) \coloneqq \bigcup_{v \in F}p(v).$$
For a set of positions $P' \subset P$, we define
$$u(P') \coloneqq \sum_{a \in P'} a,$$
where $u(P') \in \{0,\pm1\}^n$.

Let $\M$ be a family of vectors satisfying the condition of Theorem~\ref{bigtheorem}. For $u,v \in \M$, we write $u \sim v$ if and only if $(u,v) \geqslant 3$. It is straightforward to check that $\sim$ is an equivalence relation on $\M$, since the values 1 and 2 of the inner product are forbidden. Thus, all vectors in $\M$ are partitioned into groups in which all pairwise inner products are equal to 3. Denote the set of equivalence classes by $\M/\sim$. Below we classify these classes according to their structure.

\begin{definition}
    A set $S \subset \M$ is called a snake if $|S| \geqslant 2$ and there exists $h \in \{0,\pm 1\}^n$ such that
    $(h, h) = 3$ and $(v, h) = 3$ for all $v \in S$.
\end{definition}
The vector $h$ in the definition is called the head of the snake $S$. All vectors in a snake have pairwise distinct nonzero coordinates outside the head.

\begin{definition}
    A set $C \subset \M$ is called a 5-cluster if $|C| \geqslant 3$ and there exists $h \in \{0,\pm 1\}^n$ such that
    $(h, h) = 5$ and $(v, h) = 4$ for all $v \in C$.
\end{definition}

If an equivalence class consists of a single element, we call it a \textit{single vector}. In Section~\ref{ss31} we prove that all equivalence classes in $\M/\sim$ are either single vectors, snakes, or 5-clusters.


We also need several statements and definitions concerning configurations of snakes.

\begin{claim}\label{prop42}
    Let $S \in \M/\sim$ be a snake with head $h$ such that $|S| \geqslant 4$. Then $(v, h) \leqslant 0$ for all $v \in \M \setminus S$.
\end{claim}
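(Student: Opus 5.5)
Write each $v\in S$ as $v=h+a_v$, where $a_v\in P$ and $\supp(a_v)\cap\supp(h)=\emptyset$. This is possible because $(v,h)=3=(h,h)$ and $(v,v)=4$. As the paper notes after the definition of a snake, the positions $a_v$ have pairwise distinct supports. Now take $w\in\M\setminus S$ and suppose, for contradiction, that $(w,h)\geqslant 1$; we will derive a forbidden inner product. Since $S$ is a full equivalence class, $w\not\sim v$, so $(w,v)\leqslant 0$ for every $v\in S$. We have
$$(w,v)=(w,h)+(w,a_v),$$
hence $(w,a_v)\leqslant -(w,h)\leqslant -1$ for every $v\in S$. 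So every $a_v$ must satisfy $(w,a_v)=-1$. In words, $w$ must have a nonzero coordinate, of the opposite sign to $a_v$, at each of the coordinates $\supp(a_v)$, $v\in S$.

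These coordinates are pairwise distinct and lie outside $\supp(h)$. On the other hand, $(w,h)\geqslant 1$ forces $w$ to have at least one nonzero coordinate inside $\supp(h)$. Therefore
$$|\supp(w)|\geqslant |S|+1\geqslant 5,$$
which contradicts $(w,w)=4$. Hence $(w,h)\leqslant 0$.

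There is little real obstacle here. The only point needing care is that $(w,v)\leqslant 0$ holds for all $v\in S$: if $(w,v)\geqslant 1$ then, since 1 and 2 are forbidden, $(w,v)\geqslant 3$, so $w\sim v$ and $w\in S$. The argument uses $|S|\geqslant 4$ exactly, because $4+1>4$. With $|S|=3$ the vector $w$ could be of the form $-a_1-a_2-a_3+(\text{one coordinate of }h)$, which explains the hypothesis.
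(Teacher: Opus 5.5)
Your proof is correct and is essentially the paper's argument. The paper notes that $w$ has a coordinate in $\supp(h)$, so at most three of its coordinates lie outside $\supp(h)$; hence some $v\in S$ has $\supp(a_v)\cap\supp(w)=\emptyset$, giving $(w,v)=(w,h)\geqslant 1$, a contradiction. You run exactly this pigeonhole count in contrapositive form, and also spell out why $(w,v)\leqslant 0$ for all $v\in S$.
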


\begin{proof}
    Suppose, to the contrary, that there exists a vector $ v \in \M \setminus S$ such that $(v,h) \geqslant 1$. Since $|S| \geqslant 4$, there exists $w \in S$ such that $(\supp(w) \cap \supp(v)) \subset \supp(h)$. In this case $(v,w)=(v,h) \geqslant 1$, which is impossible because $v \notin S$. This contradiction proves the claim.
\end{proof}

\begin{definition}
    Let $S_1, S_2$ be snakes with heads $h_1, h_2$. We say that the snakes $S_1, S_2$ are \textit{compatible} if $(h_1, h_2) \leqslant -1$.
\end{definition}

\begin{definition}
    Let $S_1, S_2, S_3, S_4 \in \M/\sim$ be four distinct snakes. We say that these snakes form a serpentarium if they are pairwise compatible and $|S_i| \geqslant 4$ for every $i \in [4]$.
\end{definition}

\section{An example of an extremal configuration}
In this section we give an example of a configuration for which the bound in Theorem~\ref{bigtheorem} is attained. Let $n>6$, and let $P'$ be the set of vectors in $\mathbb R^n$ with a 1 in exactly one of the last $n-6$ coordinates and zeros in all other coordinates.

Denote
$$
H_1 = \begin{pmatrix} g_1 \\ g_2 \\ g_3 \\ g_4 \end{pmatrix} =
\begin{pmatrix}
1 & 1 & 1 & 0 & \dots & 0 \\
1 & -1 & -1 & 0 & \dots & 0 \\
-1 & 1 & -1 & 0 & \dots & 0 \\
-1 & -1 & 1 & 0 & \dots & 0
\end{pmatrix} \in M_{4 \times n}(\mathbb R),
$$

$$
H_2 = \begin{pmatrix} h_1 \\ h_2 \\ h_3 \\ h_4 \end{pmatrix} =
\begin{pmatrix}
0 & 0 & 0 & 1 & 1 & 1 & 0 & \dots & 0 \\
0 & 0 & 0 & 1 & -1 & -1 & 0 & \dots & 0 \\
0 & 0 & 0 & -1 & 1 & -1 & 0 & \dots & 0 \\
0 & 0 & 0 & -1 & -1 & 1 & 0 & \dots & 0
\end{pmatrix} \in M_{4 \times n}(\mathbb R).
$$
Note that $(h_i,h_j) = (g_i, g_j) = -1$ for $i\not=j \in [4]$. The families $\mathcal G = \{g_1, g_2, g_3, g_4\}$ and $\mathcal H =  \{h_1, h_2, h_3, h_4\}$ serve as two sets of heads of pairwise compatible snakes. Put
$$
\M = \{g + a \bigm| g \in \mathcal G, a \in P'
\} \cup \{h - a \bigm| h \in \mathcal H, a \in P'
\}.
$$
Clearly, every vector in $\M$ has 4 nonzero coordinates from the set $\{-1,1\}$, and $|\M| = 8|P'| = 8n-48$. It remains to verify that the vectors in $\M$ satisfy the conditions on pairwise inner products.

We call the vectors in $\{g + a \bigm| g \in \mathcal G, a \in P'\}$ vectors of the first type, and the vectors in $\{h - a \bigm| h \in \mathcal H, a \in P'\}$ vectors of the second type. Choose arbitrary distinct vectors $u,v \in \M$. If $u$ and $v$ belong to the same snake, then $(u,v)=3$. If $u$ and $v$ belong to different snakes but are of the same type, then $(u,v) \in \{-1,0\}$. If $u$ and $v$ are of different types, then again $(u,v) \in \{-1,0\}$. Thus, the conditions on inner products are satisfied for all pairs of vectors in $\M$, and hence the example satisfies the required condition.

\section{The upper bound in Theorem~\ref{bigtheorem}}

\subsection{Types of equivalence classes}\label{ss31}

We prove a lemma that shows that $\M/\sim$ contains only the structures described in Section~\ref{sec2}.

\begin{lemma}\label{lem22}
    Let $F \subset \M$ be an equivalence class in $\M$ with respect to $\sim$. Then exactly one of the following three conditions holds:\\
    (1) $|F| = 1$;\\
    (2) $F$ is a snake;\\
    (3) $F$ is a 5-cluster.
\end{lemma}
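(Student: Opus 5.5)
Since $\sim$ is an equivalence relation, any two distinct vectors $u,v$ in a class $F$ satisfy $(u,v)=3$. For vectors in $\{0,\pm1\}^n$ with four nonzero coordinates, $(u,v)=3$ forces them to agree on exactly three coordinates (same support position and same sign) and to have disjoint remaining coordinates. Equivalently, $|p(u)\cap p(v)|=3$ with $p(u),p(v)$ four-element subsets of $P$ that use distinct supports. The statement therefore reduces to a question about a family of $4$-subsets of $P$ in which every two members share exactly three positions. This is the classical sunflower/``star or clique'' dichotomy for families with pairwise intersections of size $k-1$.

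First, dispose of exclusivity. If $|F|=1$, then $F$ is neither a snake nor a 5-cluster, since these have sizes $\ge 2$ and $\ge 3$. A snake and a 5-cluster with $|F|\ge 3$ cannot coexist, because the structures are incompatible. If $F$ has head $h$ with $(h,h)=3$, then all $p(v)$ contain $p(h)$, so $|\bigcap_{v\in F} p(v)|=3$. If $F$ is a 5-cluster with head $h'$, $(h',h')=5$, then each $p(v)\subset p(h')$ (since $(v,h')=4$ and $v$ has four nonzero entries), so $\bigcup p(v)\subseteq p(h')$ has at most five elements. Three distinct $4$-subsets of a $5$-set have common intersection of size $2$, which contradicts the snake structure. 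One point needs care here: $F$ being a snake means $F$ itself admits a head, and all of this must be checked with the specific heads.

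Now the existence part. Suppose $|F|\ge 2$ and take $u\ne v$ in $F$. Let $h=u(p(u)\cap p(v))$; this is a genuine vector since the three common positions have distinct supports, and $(h,h)=3$. If every $w\in F$ contains $p(h)$, then $F$ is a snake with head $h$. Otherwise, pick $w\in F$ with $p(h)\not\subset p(w)$. Since $|p(w)\cap p(u)|=|p(w)\cap p(v)|=3$ and $|p(u)\cup p(v)|=5$, we have $|p(w)\cap(p(u)\cup p(v))|\ge 3+3-|p(w)\cap p(u)\cap p(v)|\ge 4$, where the last inequality uses that $p(w)$ misses a position of $p(h)=p(u)\cap p(v)$. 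Hence $p(w)\subset p(u)\cup p(v)=:Q$, which is a set of $5$ positions with distinct supports. Set $h'=u(Q)$, so $(h',h')=5$. It remains to show that every $x\in F$ also lies inside $Q$. If $p(x)\not\subset Q$, then $|p(x)\cap Q|\le 3$ while $p(x)$ meets each of $p(u),p(v),p(w)$ in three positions. Write $p(u),p(v),p(w)$ as $Q$ minus one element each, say $a,b,c$, all distinct. Then $p(x)\cap Q$ is a $3$-subset $T$ of $Q$ avoiding $a$, $b$ and $c$, since it must lie entirely inside each of the three sets. This gives $|T|\le 2$, a contradiction. So $F$ is a 5-cluster with head $h'$ and $|F|\ge 3$.

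The main obstacle is purely bookkeeping. We must translate $(u,v)=3$ into $|p(u)\cap p(v)|=3$ with no sign cancellation, using the fact that $\pm$ positions on the same coordinate cannot both appear in one vector. We must also make sure that $Q$ never contains two opposite positions, so that $h'$ lies in $\{0,\pm1\}^n$. Both follow from the observation that an intersection of size $3$ among four entries cannot include a cancelling pair.
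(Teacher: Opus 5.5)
Your proof is correct and follows essentially the same route as the paper: the common $3$-set $p(u)\cap p(v)$ serves as the snake head, and otherwise the $5$-set $Q=p(u)\cup p(v)$ serves as the 5-cluster head. Your case split (``every vector contains $p(h)$'' versus ``some $w$ does not''), combined with the complement argument on $Q\setminus\{a,b,c\}$, makes the paper's separate Case~1 unnecessary, and you also verify that the three alternatives are mutually exclusive, which the paper leaves implicit.
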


\begin{proof}

The case $|F|=1$ is described in condition (1). If $|F|=2$, then $F=\{v_1, v_2\}$ is a snake with head $h = u(p(v_1)\cap p(v_2))$. Now let $|F| \geqslant 3$. Consider arbitrary distinct vectors $v_1,v_2\in F$. Put $h = u(p(v_1)\cap p(v_2))$. Then $(h,h)=3$ and $(v_1,h)=(v_2,h)=3$. For any $v \in F \setminus \{v_1\}$ we have $(v,v_1)=3$, and therefore $(v,h)\geqslant 2$. There are two cases.

\emph{Case 1.} Suppose that there exists $v_3 \in F$ such that $(v_3,h)=3$. In this case $v_1,v_2,v_3$ agree on the nonzero coordinates in $\supp(h)$. If $(v,h)=3$ for all other $v \in F \setminus \{v_1,v_2,v_3\}$, then $F$ is a snake. Suppose that for some $v\in F$ we have $(v,h)=2$. Then, for each $i \in [3]$, the vector $v$ coincides with $v_i$ on exactly two coordinates of $\supp(h)$ and on one coordinate outside $\supp(h)$. But the nonzero coordinates of $v_1,v_2,v_3$ outside $\supp(h)$ are pairwise distinct, so $v$ cannot have a common additional coordinate with all three vectors $v_1,v_2,v_3$, a contradiction.

\emph{Case 2.} Now suppose that $(v,h)=2$ for every $v \in F \setminus \{v_1,v_2\}$. Choose an arbitrary vector $v_3 \in F \setminus \{v_1,v_2\}$. Then $v_3$ has a zero at one coordinate of $\supp(h)$ and shares one additional coordinate with $v_1$ and one distinct additional coordinate with $v_2$. Put $g = u(p(v_1)\cup p(v_2)\cup p(v_3))$. Then $(g,g)=5$ and $(v_i,g)=4$ for $i \in [3]$. For any $v \in F$ we have $(v,g) \geqslant 3$, since $(v,v_i) \geqslant 3$ for every $i \in [3]$. Suppose that for some $v\in F$ we have $(v,g) = 3$. Denote $a_i = u(p(v_i)\setminus p(h))$ for $i \in [2]$. Then $g-h=a_1+a_2$. From $(v,g)=3$ and $(v,h)= 2$ we get $(v,g-h)=1$. Without loss of generality, assume that $(v,a_1)=1$ and $(v,a_2)=0$. Then $(v,v_2)=(v,h)+(v,a_2)=2$, a contradiction. Hence $(v,g)=4$ for all $v\in F$, that is, $F$ is a 5-cluster.

\end{proof}

We now have a classification of equivalence classes. Let
$S_1,\ldots,S_k \subset \M$ be all equivalence classes that are snakes, and let $C_1,\ldots,C_l \subset \M$ be all equivalence classes that are 5-clusters. Denote the set of single vectors by $\T = \{ v \in \M \bigm| \nexists u \in \M: (u,v) = 3 \}$. Define $m = |\T|$. Thus, the total number of equivalence classes is equal to $k+m+l$.

For each such class $F$, we need to count the number of positions used by the vectors in $F$. We shall use the following simple observation.

\begin{claim}\label{cla23}
    Let $F \subset \M$ be an equivalence class with respect to $\sim$. Then

    {\bf (1)} if $F$ is a snake or a single vector, then $|p(F)| = |F| + 3$;

    {\bf (2)} if $F$ is a 5-cluster, then $|p(F)| \geqslant |F|$ and $|p(F)| = 5$.

    In particular, $|p(F)| \geqslant |F|$ for every equivalence class $F$.
\end{claim}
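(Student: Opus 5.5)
The plan is a direct count of positions, one case per type from Lemma~\ref{lem22}. The key fact used throughout is that a vector $v \in \{0,\pm1\}^n$ with $(v,v)=4$ has exactly four nonzero coordinates, so $|p(v)| = 4$. Moreover, $p(v)$ never contains both $a$ and $-a$, since each coordinate of $v$ has a single sign.

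\emph{Single vectors and snakes.} If $F=\{v\}$ is a single vector, then $|p(F)| = |p(v)| = 4 = |F|+3$ and there is nothing more to do.

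If $F$ is a snake with head $h$, then $(h,h)=3$ and $(v,h)=3$ give $p(h) \subset p(v)$ for every $v\in F$. Hence each $v \in F$ equals $h + a_v$ for a single position $a_v$ outside the support of $h$. Two distinct vectors $v,w\in F$ satisfy $(v,w)=3$, which forces $(a_v,a_w)=0$. So the positions $a_v$ are pairwise distinct, as already noted after the definition of a snake. Therefore $p(F) = p(h) \sqcup \{a_v : v\in F\}$, and $|p(F)| = 3+|F|$.

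\emph{5-clusters.} Let $F$ be a 5-cluster with head $h$, where $(h,h)=5$. The condition $(v,h)=4$ together with $|p(v)|=4$ forces $p(v) \subset p(h)$ for every $v\in F$. Hence $p(F) \subset p(h)$, and $|p(F)|\le 5$.

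For the reverse inequality, take three vectors of $F$ with $|F|\ge 3$. Any two distinct vectors $v_1,v_2 \in F$ are distinct $4$-subsets of the $5$-set $p(h)$, so their union is all of $p(h)$. Thus $|p(F)|=5$.

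Since each $v\in F$ is determined by which element of $p(h)$ it omits, $|F| \le 5 = |p(F)|$. The final sentence of the claim then follows from both cases, since $|F|+3 \ge |F|$.

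There is no real obstacle here. The one point needing care is the justification that $(v,h)=|p(v)\cap p(h)| - |p(v)\cap(-p(h))|$. Combined with $|p(v)|=4$ and $(v,h)\in\{3,4\}$ as appropriate, this forces the containments $p(h)\subset p(v)$ in the snake case and $p(v) \subset p(h)$ in the 5-cluster case.
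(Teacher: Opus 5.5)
Your proposal is correct and is exactly the routine verification the paper has in mind. The paper states this claim as a ``simple observation'' without proof. It relies on the remark after the definition of a snake (the off-head positions are pairwise distinct) and on the fact that every vector of a 5-cluster uses $4$ of the $5$ positions of its head. These are precisely the two facts you verify via $(v,h)=|p(v)\cap p(h)|-|p(v)\cap(-p(h))|$. Two vectors of $F$, not three, suffice to get $|p(F)|=5$, and the final sentence uses the trichotomy of Lemma~\ref{lem22}, as you indicate.
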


\subsection{Vectors with a fixed nonzero coordinate}

For each position $a \in P$, let $f(a)$ denote the number of equivalence classes in which the position $a$ is used. Formally,
$$f(a) \coloneqq |\{K \in \M/\sim \bigm| a \in p(K)\}|.$$

By double counting the incidence of a position with an equivalence class, we have
\begin{equation}\label{dc}
    \sum_{a \in P} f(a) = \sum_{K \in \M/\sim} |p(K)|.
\end{equation}

In this section we prove upper bounds on the function $f$.

\begin{lemma}\label{lem31}
    For every $a \in P$ we have $f(a) \leqslant 4$.
\end{lemma}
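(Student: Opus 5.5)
The plan is to turn the count of classes through $a$ into a bound on a family of vectors with pairwise negative inner products, and then finish with a Gram-type (norm of the sum) inequality.

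Fix a position $a\in P$. Let $K_1,\ldots,K_N$ be the distinct equivalence classes with $a\in p(K_j)$; we must show $N\leqslant 4$. From each $K_j$ choose one vector $v_j\in K_j$ with $a\in p(v_j)$, i.e.\ $(v_j,a)=1$. The $v_j$ lie in pairwise distinct classes of $\sim$. The inner product $3$ occurs only inside a class, and $1,2$ are forbidden. Hence for $j\neq l$ we have $(v_j,v_l)\leqslant 0$.

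Next, strip off the common position. Put $w_j=v_j-a$. Since $v_j$ has the entry of $a$ in the coordinate $\supp(a)$, $w_j\in\{0,\pm1\}^n$ has exactly three nonzero coordinates, all outside $\supp(a)$. Thus $(w_j,w_j)=3$, $(w_j,a)=0$, and
$$(v_j,v_l)=(a,a)+(w_j,w_l)=1+(w_j,w_l).$$
Therefore $(w_j,w_l)\leqslant -1$ for all $j\neq l$.

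Finally, apply the standard bound for vectors of equal norm with pairwise negative inner products:
$$0\leqslant\Bigl(\sum_{j=1}^N w_j,\sum_{j=1}^N w_j\Bigr)=\sum_{j}(w_j,w_j)+\sum_{j\neq l}(w_j,w_l)\leqslant 3N-N(N-1)=N(4-N),$$
which forces $N\leqslant 4$, i.e.\ $f(a)\leqslant 4$.

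There is no real obstacle here. The only step needing care is the justification that vectors from different classes have inner product at most $0$. This is exactly the observation from the definition of $\sim$ (the value $3$ occurs only within a class, and $1,2$ are excluded). Lemma~\ref{lem22} is not even needed, since we only use one representative vector per class.
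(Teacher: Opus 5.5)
Your proof is correct and is essentially the paper's own argument: take one representative per class through $a$, subtract $a$ to get norm-$3$ vectors with pairwise inner products at most $-1$, and use $0\leqslant\bigl\|\sum w_j\bigr\|^2\leqslant N(4-N)$. You also spell out the identity $(v_j,v_l)=1+(w_j,w_l)$ and the fact that vectors in different classes have inner product at most $0$, both of which the paper states without justification.
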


\begin{proof}

Set $s=f(a)$. Let $w_1, \dots, w_s$ be vectors belonging to distinct equivalence classes such that $a \in p(w_i)$ for every $i \in [s]$.

Put $v_i = w_i - a$. For any distinct $i,j \in [s]$ we have $(w_i, w_j) \leqslant 0$, hence $(v_i, v_j) \leqslant -1$.

We obtain the following chain:

\begin{equation}\label{eq2}
    0 \leqslant \left(\sum_{j = 1}^s v_j, \sum_{j = 1}^s v_j\right) = \sum_{j = 1}^s (v_j, v_j) + \sum_{\substack{i, j \in [s] \\ i \not= j}} (v_{i}, v_{j}) \leqslant 3s - s(s-1) = s(4-s).
\end{equation}
The condition $s(4-s) \geqslant 0$ implies $s \leqslant 4$, as required.

\end{proof}

Next we show that the bound $f(a) \leqslant 4$ can be improved if at least two vectors using the position $a$ occur in the same equivalence class.

\begin{lemma}\label{lemheads}
    Let $a \in P$, and suppose that there exist $w_1, w_2 \in \M$ such that $(w_1, w_2) = 3$ and $a \in p(w_1) \cap p(w_2)$. Then $f(a) \leqslant 3$.
\end{lemma}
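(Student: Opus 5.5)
We refine the Gram-matrix argument of Lemma~\ref{lem31}. Set $s=f(a)$ and suppose, for contradiction, that $s=4$. Choose representatives so that $w_1$ is one of the two given vectors; its class $K_1$ then contains the second vector $w_2' $ with $(w_1,w_2')=3$ and $a\in p(w_2')$. Let $w_1,w_2,w_3,w_4$ lie in four distinct classes, all using $a$, and put $v_i=w_i-a$ and $v_1'=w_2'-a$. As in Lemma~\ref{lem31}, $(v_i,v_j)\leqslant -1$ for $i\ne j$. Moreover $(v_1',v_j)\leqslant -1$ for $j=2,3,4$, since $w_2'$ is not equivalent to $w_j$. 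Finally $(v_1,v_1')=(w_1,w_2')-1=2$.

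The equality case of \eqref{eq2} is the first thing to exploit. With $s=4$ we have
\[
0\leqslant \Bigl(\sum_{j=1}^4 v_j,\sum_{j=1}^4 v_j\Bigr)\leqslant 12-12=0,
\]
so $\sum_j v_j=0$ and $(v_i,v_j)=-1$ for all $i\ne j$. The same holds for the quadruple $v_1',v_2,v_3,v_4$, so $v_1'=-(v_2+v_3+v_4)=v_1$. Then $w_2'=w_1$, contradicting $(w_1,w_2')=3\ne 4$. This already gives $f(a)\leqslant 3$.

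As a sanity check, the two quadruples are each valid choices of representatives, one per class, all using $a$, so Lemma~\ref{lem31}'s chain applies verbatim to each. A more robust alternative avoids the equality analysis: apply the Gram inequality to the five vectors $v_1,v_1',v_2,v_3,v_4$. The squared norm of $v_1+v_1'+2(v_2+v_3+v_4)$ is at most
\[
3+3+12\cdot 3+2\cdot 2+2\cdot(2\cdot 3\cdot(-1))\cdot 2+ 4\cdot 2\cdot 3\cdot(-1)=55-72<0,
\]
where the three terms are the $(v_1,v_1')$ cross term, the cross terms of $v_1,v_1'$ with the others, and the pairs among $v_2,v_3,v_4$. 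This is a contradiction.

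There is no real obstacle here. The only point needing care is that the second vector in $K_1$ must be paired with representatives of the \emph{other} classes, so that its inner products with them are at most $0$. This follows because inner products between inequivalent vectors lie in $\{-4,\dots,0\}$.
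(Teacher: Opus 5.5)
Your main argument is correct and is the same as the paper's: for $s=4$, equality throughout \eqref{eq2} forces $\sum_j v_j=0$, and repeating this with $w_1$ replaced by the second vector of its class gives $v_1'=v_1$, a contradiction. Your five-vector Gram alternative is also valid, though your displayed expression actually evaluates to $46-48=-2$, not $55-72$; the conclusion that it is negative is unaffected.
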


\begin{proof}
    We use the notation from the proof of Lemma~\ref{lem31}. We show that $s$ cannot be $4$. If $s=4$, then all inequalities in the chain~\eqref{eq2} become equalities. Thus, we obtain $\sum_{i=1}^4 v_i = 0$.

    By assumption, one of the equivalence classes contains at least two vectors using the position $a$. Without loss of generality, there exists $w_1' \ne w_1$ in the same equivalence class as $w_1$ such that $a \in p(w_1')$. Denote $v_1' = w_1'-a$. Applying the same argument to this vector, we get $v_1'+v_2+v_3+v_4=0$, whence $v_1=v_1'$. Therefore $w_1$ and $w_1'$ coincide, a contradiction.

\end{proof}

We record several consequences of Lemma~\ref{lemheads}.

\begin{claim}\label{cla33}
    Let $C$ be a 5-cluster. Then for every position $a \in p(C)$ we have $f(a) \leqslant 3$.
\end{claim}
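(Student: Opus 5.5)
The claim will follow directly from Lemma~\ref{lemheads}. It is enough to show that for each position $a \in p(C)$ there are two distinct vectors $w_1, w_2 \in C$ with $a \in p(w_1) \cap p(w_2)$. Any two distinct vectors of $C$ lie in the same equivalence class, so $(w_1,w_2)=3$ automatically, and the hypothesis of Lemma~\ref{lemheads} is then satisfied.

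To find such a pair, let $g$ be the head of the 5-cluster: $(g,g)=5$ and $(v,g)=4$ for all $v\in C$. Each $v\in C$ has exactly four nonzero coordinates, and $(v,g)=4$ forces $p(v)\subset p(g)$. So every $v\in C$ is obtained from $g$ by deleting exactly one of its five positions. Distinct vectors of $C$ omit distinct positions, so the map $v \mapsto p(g)\setminus p(v)$ is injective. Hence $p(C)\subset p(g)$, and a fixed position $a\in p(C)$ is omitted by at most one vector of $C$.

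Since $|C|\ge 3$, at least $|C|-1\ge 2$ vectors of $C$ contain $a$. Call two of them $w_1,w_2$. Then $(w_1,w_2)=3$ and $a\in p(w_1)\cap p(w_2)$, so Lemma~\ref{lemheads} gives $f(a)\le 3$.

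The only step that needs care is the observation that $(v,g)=4$ with $(v,v)=4$ and $v\in\{0,\pm1\}^n$ forces $v$ to agree with $g$, in sign as well, on all four of its nonzero coordinates. This holds because each coordinate contributes at most $1$ to the inner product, so reaching $4$ requires every nonzero coordinate of $v$ to contribute exactly $1$.
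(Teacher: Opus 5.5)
Your proof is correct and follows the paper's argument: each vector of $C$ omits a distinct one of the five positions of the head, so at most one vector misses $a$, and $|C|\geqslant 3$ then gives two vectors sharing $a$, to which Lemma~\ref{lemheads} applies. You also spell out why $p(v)\subset p(g)$ and why the omitted positions are distinct, which the paper leaves implicit.
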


\begin{proof}
    Consider an arbitrary position $a \in p(C)$. In a 5-cluster $C$, at most one vector does not contain the position $a$, since each vector in the 5-cluster uses 4 positions out of 5 fixed positions. By definition, $C$ contains at least 3 vectors, so there exist two vectors $w_1,w_2 \in C$ such that $a \in p(w_1) \cap p(w_2)$. Therefore Lemma~\ref{lemheads} gives $f(a) \leqslant 3$.
\end{proof}

\begin{lemma}\label{lemsuphead}
    Fix a position $a \in P$. Then\\
    (1) Suppose that there exists a snake $S_i$ with head $h$ such that $a \in p(h)$. Then $f(a) \leqslant 3$.\\
    (2) Suppose that snakes $S_i, S_j$ with heads $h_i$ and $h_j$, respectively, are compatible and $|S_i| \geqslant 4$, $|S_j| \geqslant 4$. If their heads have a common position $a \in p(h_i) \cap p(h_j)$, then $f(a)=2$.
\end{lemma}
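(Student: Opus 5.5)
For part (1) I would simply reduce to Lemma~\ref{lemheads}. By definition a snake has $|S_i|\geqslant 2$, and every vector $v\in S_i$ satisfies $(v,h)=3=(h,h)$. Hence $v$ agrees with $h$ on all of $\supp(h)$, so $p(h)\subset p(v)$. Taking two distinct $w_1,w_2\in S_i$, we get $a\in p(w_1)\cap p(w_2)$ and $(w_1,w_2)=3$, since they lie in the same equivalence class. Lemma~\ref{lemheads} then gives $f(a)\leqslant 3$.

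For part (2), the lower bound $f(a)\geqslant 2$ is immediate. $S_i$ and $S_j$ are distinct classes, and by the observation above both use the position $a$. For the upper bound I argue by contradiction: suppose a third class $K\neq S_i,S_j$ contains a vector $w$ with $a\in p(w)$. Since $|S_i|,|S_j|\geqslant 4$ and $w\notin S_i\cup S_j$, Claim~\ref{prop42} gives $(w,h_i)\leqslant 0$ and $(w,h_j)\leqslant 0$.

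Now write $h_i=a+x_i$ and $h_j=a+x_j$, where $x_i,x_j\in\{0,\pm1\}^n$ each have exactly two nonzero coordinates, both outside the coordinate of $a$. Since $(w,a)=1$, the two inequalities become
$$(w,x_i)\leqslant -1 \quad\text{and}\quad (w,x_j)\leqslant -1.$$
On the other hand, compatibility gives $(h_i,h_j)\leqslant -1$, so $(x_i,x_j)=(h_i,h_j)-1\leqslant -2$. Because $x_i$ and $x_j$ each have only two nonzero coordinates, this forces $\supp(x_i)=\supp(x_j)$ with opposite signs on both, that is, $x_j=-x_i$. Then $(w,x_j)=-(w,x_i)\geqslant 1$, contradicting $(w,x_j)\leqslant -1$. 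Hence no third class uses $a$, and $f(a)=2$.

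The only step needing care is the reduction $(x_i,x_j)\leqslant-2\Rightarrow x_j=-x_i$. It is short but essential, and it is exactly where the hypothesis $(h_i,h_j)\leqslant-1$ together with the shared position $a$ is used. The size hypothesis $|S_i|,|S_j|\geqslant 4$ enters only through Claim~\ref{prop42}.
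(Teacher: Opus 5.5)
Your proof is correct and follows the paper's argument. Part (1) is the same reduction to Lemma~\ref{lemheads}, and in part (2) your conclusion $x_j=-x_i$ is exactly the paper's identity $h_i+h_j=2a$, after which applying Claim~\ref{prop42} to both heads rules out any third class using $a$ (you also make explicit the lower bound $f(a)\geqslant 2$ and the computation $(x_i,x_j)=(h_i,h_j)-1$, which the paper leaves implicit).
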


\begin{proof}
    (1) Consider distinct $w_1,w_2 \in S_i$. Clearly,
    $(w_1,w_2)=3$ and $a \in p(h)=p(w_1)\cap p(w_2)$. Hence Lemma~\ref{lemheads} gives $f(a) \leqslant 3$.

    (2) By assumption, $S_i$ and $S_j$ are compatible, so $(h_i,h_j) \leqslant -1$. Since the heads $h_i$ and $h_j$ have the common position $a$, their other two positions must be opposite; that is, $h_i+h_j=2a$. By Claim~\ref{prop42}, for every vector $w \in \M \setminus (S_i \cup S_j)$ we have $(w,h_i) \leqslant 0$ and $(w,h_j) \leqslant 0$. Hence $(w,2a)=(w,h_i+h_j) \leqslant 0$, that is, $a \notin p(w)$ for every $w \in \M \setminus (S_i \cup S_j)$. Thus $f(a)=2$.

\end{proof}

\subsection{The number of equivalence classes}

In this section we proof upper bound for the number of equivalence classes of $\M$.

\begin{lemma}\label{mainlem}
    If $|\M| \geqslant 8n-48$, then $k+l+m<30$.
\end{lemma}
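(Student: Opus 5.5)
We double count position–class incidences through \eqref{dc}, using Claim~\ref{cla23} to express $\sum_K |p(K)|$ in terms of $|\M|$ and the number of classes, and Lemmas~\ref{lem31}, \ref{lemheads}, \ref{lemsuphead} to bound $\sum_a f(a)$ from above.

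For the lower bound on the right-hand side of \eqref{dc}, Claim~\ref{cla23} gives
\[
\sum_{K}|p(K)| \;=\; \sum_{i=1}^k (|S_i|+3) + 4m + 5l \;\geqslant\; |\M| - \sum_{j}|C_j| + 3k + 3m + 5l .
\]
A 5-cluster lives on 5 positions and its vectors are determined by which of the 5 positions is omitted, so $|C_j|\leqslant 5$. Hence
\[
\sum_K|p(K)| \;\geqslant\; |\M| + 3(k+m).
\]
Together with $|\M|\geqslant 8n-48$, this gives $\sum_K|p(K)|\geqslant 8n-48+3(k+m)$.

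For the upper bound on the left-hand side, $|P|=2n$ and Lemma~\ref{lem31} give $\sum_a f(a)\leqslant 8n$. We then gain a deficit at special positions:
\begin{itemize}
\item every position in the head of a snake has $f\leqslant 3$ by Lemma~\ref{lemsuphead}(1);
\item every position of a 5-cluster has $f\leqslant 3$ by Claim~\ref{cla33};
\item for each pair $\{a,-a\}$ at most one of $a,-a$ can have $f(a)=4$ interacting nontrivially, although I would not rely on this.
\end{itemize}
Comparing the two bounds yields
\[
8n-48+3(k+m) \;\leqslant\; 8n - D,
\]
where $D$ counts positions with $f\leqslant 3$. So $3(k+m)+D\leqslant 48$. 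This already gives $k+m\leqslant 16$. For clusters, the five positions of each $C_j$ contribute to $D$, but distinct clusters may share positions, and positions with $f\leqslant 3$ are shared among at most $3$ classes. Each cluster contributes 5 incidences at positions of deficit at least $1$, so $D\geqslant 5l/3$. Likewise each snake's 3 head positions give $D\geqslant 3k/3=k$ for the snakes.

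Putting this together, $3(k+m)+\tfrac{5}{3}l\leqslant 48$. This gives $l\leqslant 28$ and $k+m\leqslant 16$, but it does not directly give $k+l+m<30$: the weighted sum allows $k+m+l$ up to roughly $28$ only when $k=m=0$. More precisely, $k+l+m\leqslant \tfrac{3}{5}\bigl(48-3(k+m)\bigr)+(k+m)=28.8-\tfrac{4}{5}(k+m)<30$. So the inequality closes once the deficit bound $D\geqslant 5l/3$ is justified, and that justification is the main obstacle.

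The delicate point is that a position $a$ with $f(a)=3$ could be counted by several clusters. Counting incidences handles this: $a$ lies in $p(K)$ for at most $f(a)\leqslant 3$ classes, so the total deficit is at least (cluster–position incidences)$/3$. If this estimate comes out slightly too weak, I would instead refine it by observing that two 5-clusters sharing a position force, via the equality case of \eqref{eq2}, an even smaller $f$. Some care is also needed to make the inequality strict; the $-48$ slack and the integer rounding provide it.
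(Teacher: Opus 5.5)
Your argument is correct and is essentially the paper's proof. The incidence count $5l\le\sum_{a\in P'}f(a)\le 3|P'|$ over cluster positions gives a deficit of at least $\frac53 l$ below $8n$. Together with $\sum_F|p(F)|\geqslant|\M|+3(k+m)$ (via $|C_j|\le 5$), this yields $3(k+m)+\frac53 l\le 48$ and hence $k+l+m\le 28.8$.

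Your side remark that at most one of $a,-a$ can have $f=4$ is false. In the extremal example every free coordinate has $f(e_j)=f(-e_j)=4$, which is exactly what Lemma~\ref{thm3} exploits. Your closing hedges are also unnecessary, since $28.8<30$ directly, but nothing in your derivation depends on either.
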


\begin{proof}
    Let $P' \coloneqq \bigcup_{i=1}^l p(C_i)$ be the set of positions that occur in 5-clusters.

    By Claim~\ref{cla33}, for every $a \in P'$ we have $f(a) \leqslant 3$, and therefore
    \[
    \sum_{i=1}^l |p(C_i)| = 5l \leqslant \sum_{a\in P'} f(a) \leqslant 3|P'| \implies |P'| \geqslant \frac{5}{3}l.
    \]

    Combining this bound with Lemma~\ref{lem31}, we get
    \[
    \sum_{a \in P} f(a) = \sum_{a\in P'} f(a) + \sum_{a\in P \setminus P'} f(a)  \leqslant 4|P| - |P'| \leqslant 8n - \frac{5}{3}l.
    \]

    Using Claim~\ref{cla23}, we obtain the following chain:
    \begin{align*}
     8n - \frac{5}{3}l \geqslant    \sum_{a \in P} f(a) = \sum_{F \in \M/\sim} |p(F)| = \sum_{i=1}^k |p(S_i)| + \sum_{i=1}^l |p(C_i)| + \sum_{v \in \T} |p(v)| \\
    \geqslant  |\M| + 3(k+m).
    \end{align*}

    Thus $|\M|+3(k+m) \leqslant 8n-\frac{5}{3}l$. Since $|\M| \geqslant 8n-48$, it follows that $k+m+\frac{5}{9}l \leqslant 16$, and hence $k+m+l \leqslant 16 \cdot \frac{9}{5}<30$. Lemma~\ref{mainlem} is proved.
\end{proof}

\subsection{Configurations of snakes}


In this section we show that, for sufficiently large $n$, two serpentaria can be found in an extremal example.

\begin{lemma}\label{thm3}
    There exists $n_0$ such that, if $n>n_0$ and $|\M| \geqslant 8n-48$, then there are two disjoint serpentaria.
\end{lemma}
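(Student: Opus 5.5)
We will combine the counting from Lemma~\ref{mainlem} with a pigeonhole argument. Since $k+l+m<30$, the classes are few. By Claim~\ref{cla23} each 5-cluster has at most $5$ vectors and each single vector is one vector, so all but at most $29\cdot 5$ vectors of $\M$ lie in snakes. Hence $\sum_{i=1}^k |S_i| \geqslant 8n-48-145$, and we need at least $8$ snakes of size linear in $n$.

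\emph{Step 1: refined counting.} In the chain of Lemma~\ref{mainlem} we have $\sum_a f(a)\leqslant 4n$ and $\sum_a f(a)\geqslant |\M|+3(k+m)$. For any snake $S_i$, the three positions of its head $h_i$ satisfy $f\leqslant 3$ by Lemma~\ref{lemsuphead}(1). Each head thus forces a deficit of at least $1$ at three positions, although distinct heads may share positions. So $4n - |\{a: f(a)\leqslant 3\}| \geqslant 8n-48+3k$. We will combine this with the loss bound $\sum_a (4-f(a))\leqslant 48-3(k+m)-\tfrac53 l$ coming from the same computation. The slack is bounded by a constant, so all but $O(1)$ positions have $f(a)=4$. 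Moreover, the tail positions of vectors in large snakes must mostly be positions with $f=4$.

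\emph{Step 2: structure at a position with $f(a)=4$.} By the equality case in \eqref{eq2}, if $f(a)=4$ then the four vectors $v_i=w_i-a$ satisfy $\sum v_i=0$, $(v_i,v_j)=-1$ pairwise, and each is a $3$-vector. Take a large snake $S$ with head $h$ and a vector $h+a\in S$ with $f(a)=4$. The three other classes through $a$ contribute vectors $h_j+a$ with $h+\sum_j h_j=0$ and $(h,h_j)=-1$. Since $n$ is large and snakes are few, we choose a position $a$ lying in the tail of a large snake and with $f(a)=4$, such that the other three classes through $a$ are also large snakes. Here we use that only $O(1)$ positions are used by small classes, namely 5-clusters, single vectors, snakes of size $<4$, and head positions. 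So these $4$ vectors $h_j$ are the heads of pairwise compatible snakes of size $\geqslant 4$, giving a serpentarium $\{S,S_2,S_3,S_4\}$.

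\emph{Step 3: a second, disjoint serpentarium.} The four snakes of the first serpentarium use each tail position at most once each. Claim~\ref{prop42} shows that any other vector has nonpositive product with their heads, and four heads summing to zero fill $f(a)=4$ at their common tail positions. Hence the snakes of this serpentarium have pairwise disjoint tails, and each has size at most about $n$. Their total size is therefore at most roughly $4n$, because each tail position with $f=4$ is used by exactly one vector of each snake, up to $O(1)$ exceptions. The remaining $\geqslant 4n-O(1)$ vectors lie in other snakes. We repeat Step 2 with a position $a$ in the tail of one of the remaining large snakes. We must argue that the three companions through $a$ are not in the first serpentarium. Otherwise the sum relation $h+\sum h_j=0$, together with the first serpentarium's relation $\sum h_i=0$, forces $h$ to equal one of the first heads, a contradiction. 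This gives a second serpentarium disjoint from the first.

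The main obstacle is Step 3, in particular bounding the total size of one serpentarium by $4n+O(1)$. We need to show that no position can be shared by the tails of two snakes in the same serpentarium beyond $O(1)$ exceptions. For this we will rely on $f(a)\leqslant 4$ and the sign constraints coming from $\sum v_i=0$.
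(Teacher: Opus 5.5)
Your Steps~1--2 follow the paper's argument: at most $48$ positions have $f(a)\leqslant 3$, only $O(1)$ positions meet clusters, single vectors, snakes of size $\leqslant 3$ or heads, and a good position with $f(a)=4$ yields a serpentarium. Two slips there: $\sum_a f(a)\leqslant 4|P|=8n$, not $4n$, and the loss bound is $\sum_a(4-f(a))\leqslant 48-3(k+m)$, with no $\frac53 l$ term.

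The genuine gap is the disjointness claim in Step~3. The relations $h+\sum_j h_j=0$ and $\sum_i g_i=0$ force $h$ to be a first head only when \emph{all three} companions of $S$ lie in the first serpentarium. Partial overlaps are algebraically consistent. Take $g_1,\dots,g_4$ to be the heads of Section~3 and $e_i$ the unit vectors. Then $\{g_1,\,-e_1+e_4+e_5,\,-e_2-e_4+e_6,\,-e_3-e_5-e_6\}$ and $\{g_1,\,g_2,\,-e_1+e_4+e_5,\,-e_1-e_4-e_5\}$ are quadruples of $3$-vectors with pairwise products $-1$ and zero sum. They share one, respectively two, heads with $\{g_1,\dots,g_4\}$, yet the other heads are not first heads.

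The missing ingredient is that $S$ is large. Each $w=h+t\in S$ has $(w,g_i)\leqslant 0$ by Claim~\ref{prop42}. So $(h,g_i)\geqslant 1$ would confine all tails of $S$ to the at most two coordinates of $\supp(g_i)\setminus\supp(h)$, contradicting $|S|\geqslant 4$. Hence $(h,g_i)\leqslant 0$, and $\sum_i g_i=0$ forces $(h,g_i)=0$ for all $i$; this is Lemma~\ref{lemserp}(4). A shared snake with head $g_1$ using $a$ would then give $(h+a,g_1+a)=1$, which is forbidden.

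Second, the obstacle you single out is false and also unnecessary. In the example of Section~3, every $b\in P'$ occurs as a tail in all four snakes $\{g_i+b' : b'\in P'\}$, so the tails of snakes in one serpentarium can coincide entirely. The bound you want is immediate. Two vectors $h+t,h+t'$ of one snake need $(t,t')=0$, so a snake has at most $n-3$ vectors. Hence at least $8n-48-4(n-3)=4n-36$ vectors lie outside the first serpentarium. Only $O(1)$ of them lie in small classes or have a bad tail position, since each bad position is a tail of at most one vector per class and lies in at most $4$ classes. So your second run of Step~2 goes through, and with these repairs your route works.

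The paper avoids Step~3 altogether by pairing $a$ with $-a$. If no $a\in H$ had $f(a)=f(-a)=4$, each pair $\{a,-a\}\subset H$ would lose at least $1$, giving $\sum_a f(a)\leqslant 8n-\frac12|H|<8n-48$. The classes through $a$ and through $-a$ then form two serpentaria. These are automatically disjoint, because a snake containing both $h+a$ and $h-a$ would have a pair with inner product $2$.
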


\begin{proof}
    For a vector $v$ we write $p_{pm}(v)=p(v)\cup\{-a:a\in p(v)\}$. Define $p_{pm}(F)$ analogously for a set of vectors $F$.
    Define the sets $H_1,H_2,H_3,H_4 \subset P$ as follows:
    \[
    H_1 \coloneqq p_{pm}(\{h \bigm| \exists i \in [k], h   \text{ is the head of } S_i\}),
    \]
    \[
    H_2 \coloneqq \bigcup_{i\in [l]} p_{pm}(C_i),
    \]
    \[
    H_3 \coloneqq \bigcup_{v \in \T} p_{pm}(v),
    \]
    \[
    H_4 \coloneqq \bigcup_{i \in [k], |S_i| \leqslant 3} p_{pm}(S_i),
    \]
    \[
    H \coloneqq P \setminus (H_1 \cup H_2 \cup H_3 \cup H_4).
    \]
    From the definitions it follows that $|H_1| \leqslant 6k$, $|H_2| \leqslant 10l$, $|H_3| \leqslant 8m$, and $|H_4| \leqslant 12k$. By Lemma~\ref{mainlem}, we have $k+l+m<30$. Therefore
    $$|H_1|+|H_2|+|H_3|+|H_4| \leqslant 18(k+l+m) \leqslant 18\cdot29=522.$$
    Hence
    $$|H| \geqslant |P|-|H_1|-|H_2|-|H_3|-|H_4| \geqslant 2n-522.$$

    We prove that for sufficiently large $n$ there exists a position $a \in H$ such that $f(a)=f(-a)=4$. We argue by contradiction. Note that if $a \in H$, then $-a \in H$. Let $n>309$. Then $|H|>2\cdot309-522=96$. We have
    \[
    \sum_{a\in P} f(a) = \sum_{a\in H} f(a) + \sum_{a\in P \setminus H} f(a)
    \leqslant
    4|P| - \frac{1}{2}|H| < 8n-48 \implies |\M|<8n-48.
    \]
    This is a contradiction. Now fix $a\in H$ for which $f(a)=f(-a)=4$. Since $a\notin H_2\cup H_3\cup H_4$, every class that uses $a$ is a snake of cardinality at least 4. Since $a\notin H_1$, the position $a$ does not lie in the head of such a snake. We show that the snakes containing the position $a$ are pairwise compatible. Consider any two distinct such snakes $S_1,S_2$ with heads $h_1,h_2$, respectively. Then
    $$h_1+a\in S_1, \quad h_2+a\in S_2 \implies (h_1+a,h_2+a)\leqslant 0 \implies (h_1,h_2)\leqslant -1,$$
    as required. Thus, the snakes containing the position $a$ form a serpentarium. The same argument applies to $-a$. It remains only to observe that these serpentaria cannot have common snakes, since a snake cannot use both positions $a$ and $-a$ at the same time. Hence we have found two disjoint serpentaria, and the lemma is proved.
\end{proof}

\subsection{Completion of the proof}

\begin{lemma}\label{lemserp}
    Fix a serpentarium, and let $\mathcal H$ be the set of heads of its snakes. For $a \in P$, denote $\mathcal H_a \coloneqq \{h \in \mathcal H: a \in p(h)\}$. Then the following properties hold.

    (1) For any two distinct $h_1 \not= h_2 \in \mathcal H$, we have $(h_1,h_2)=-1$.

    (2) For all $a \in P$, we have $|\mathcal H_a|=|\mathcal H_{-a}|$.

    (3) For every $h \in \mathcal H$ and every vector $v \in \M$ not belonging to any snake in this serpentarium, we have $(v,h)=0$.

    (4) Let $h_i$ be the head of a snake $S_i$ that does not belong to this serpentarium. If $|S_i|\geqslant 4$, then $(h_i,h)=0$ for every $h\in\mathcal H$.
\end{lemma}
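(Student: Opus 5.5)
The plan is to get (1) and (2) together from a Gram-type computation on the four heads, and then derive (3) and (4) from Claim~\ref{prop42} combined with the resulting identity $\sum_{h\in\mathcal H} h=0$.

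\emph{Step 1: properties (1) and (2).} Let $\mathcal H=\{h_1,\dots,h_4\}$. Each head has $(h_i,h_i)=3$, and compatibility gives $(h_i,h_j)\leqslant -1$ for $i\neq j$. Exactly as in \eqref{eq2},
\[
0\leqslant\Bigl(\sum_{i=1}^4 h_i,\sum_{i=1}^4 h_i\Bigr)\leqslant 4\cdot 3-4\cdot 3=0 .
\]
All inequalities are therefore equalities, so $(h_i,h_j)=-1$ for all $i\neq j$, which is (1). We also get $\sum_i h_i=0$. Pairing this sum with a position $a$ gives $|\mathcal H_a|-|\mathcal H_{-a}|=0$, which is (2).

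\emph{Step 2: property (3).} Take $v\in\M$ outside the four snakes. Since each $|S_i|\geqslant 4$, Claim~\ref{prop42} gives $(v,h_i)\leqslant 0$ for every $i$. On the other hand $\sum_i (v,h_i)=(v,\sum_i h_i)=0$. A sum of four nonpositive terms equal to zero forces each term to vanish, so $(v,h_i)=0$ for all $i$.

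\emph{Step 3: property (4).} Let $S$ be a snake with head $h'$, $|S|\geqslant 4$, not in the serpentarium. The idea is to apply (3) to the vectors of $S$. For $w\in S$ write $w=h'+b_w$, where the $b_w$ are distinct positions outside $\supp(h')$. By (3), $(h',h)+(b_w,h)=0$ for each $w\in S$ and each $h\in\mathcal H$. The positions $b_w$ have pairwise distinct supports, and there are at least $4$ of them. The set $\supp(h)\setminus\supp(h')$ has at most $3$ coordinates, so some $w\in S$ has $b_w$ with $(b_w,h)=0$. For that $w$ we get $(h',h)=0$, which is (4).

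\emph{Expected obstacle.} The only delicate point is the support-counting argument in Step~3. It needs the additional coordinates of distinct vectors of a snake to lie in pairwise distinct coordinates, which the paper notes right after the definition of a snake. It also needs $|S|\geqslant 4$ to beat the three coordinates of $\supp(h)$. Everything else follows mechanically from the identity $\sum h_i=0$.
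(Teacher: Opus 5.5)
Your proof is correct, and for (1)--(3) it follows the paper's argument essentially verbatim: the Gram computation forces $\sum_{h\in\mathcal H}h=0$, and Claim~\ref{prop42} combined with this zero sum gives (3). For (4) you take a slightly shorter route with the same ingredients: you apply the already proved (3) to the vectors $w=h'+b_w$ of $S$ and pick one whose extra coordinate avoids $\supp(h)$, which gives $(h',h)=0$ directly, whereas the paper first derives $(h_i,h)\leqslant 0$ by contradiction (Claim~\ref{prop42} plus the same support count) and then repeats the zero-sum argument via $\bigl(h_i,\sum_{h\in\mathcal H}h\bigr)=0$.
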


\begin{proof}
    {\bf (1)} By the definition of a serpentarium, for any $h_1 \not= h_2 \in \mathcal H$ we can say that $(h_1,h_2)\leqslant -1$. We have
    \begin{equation}\label{eqheads}
        0 \leqslant \left(\sum_{h \in \mathcal H} h, \sum_{h \in \mathcal H} h\right) = 3\cdot4 + \sum_{h_1 \not=h_2 \in \mathcal H} (h_1,h_2) \leqslant 12-12=0.
    \end{equation}
    It follows that
    $$\sum_{h_1 \not=h_2 \in \mathcal H} (h_1,h_2)=-12.$$
    This is possible only if $(h_1,h_2)=-1$ for all $h_1 \not= h_2\in\mathcal H$.

    {\bf (2)} All inequalities in~\eqref{eqheads} must be equalities, and hence $\sum_{h\in\mathcal H} h=0$. It follows that the positions $a$ and $-a$ occur in the same number of heads from $\mathcal H$, as required.

    {\bf (3)} Consider an arbitrary vector $v\in\mathcal M$ that does not belong to any snake in this serpentarium. By Claim~\ref{prop42}, we have $(v,h)\leqslant 0$ for every $h\in\mathcal H$. Observe that
    $$
    \left(v,\sum_{h \in \mathcal H} h \right) = \sum_{h \in \mathcal H} (v,h) = \sum_{a \in p(v)} \sum_{h \in \mathcal H}  (a,h) = 0.
    $$
    Therefore $(v,h)$ must be zero for every $h\in\mathcal H$.

    {\bf (4)} Consider an arbitrary head $h\in\mathcal H$. Suppose that $(h_i,h)\geqslant 1$. On the other hand, for every vector $w\in S_i$, Claim~\ref{prop42} gives $(w,h)\leqslant 0$. This means that $\supp(w)$ must intersect $\supp(h)\setminus\supp(h_i)$. This contradicts the assumption $|S_i|\geqslant 4$. Thus $(h_i,h)\leqslant 0$.

    As in part (3), we have
    $$
    \left(h_i,\sum_{h \in \mathcal H} h \right) = \sum_{h \in \mathcal H} (h_i,h) = \sum_{a \in p(h_i)} \sum_{h \in \mathcal H}  (a,h) = 0,
    $$
    so $(h_i,h)$ must be zero for every $h\in\mathcal H$.
\end{proof}

By Lemma~\ref{thm3}, the set $\M$ already contains two disjoint serpentaria; denote them by $\mathcal S_1$ and $\mathcal S_2$. Without loss of generality, assume that $\mathcal S_1=\{S_1,S_2,S_3,S_4\}$ and $\mathcal S_2=\{S_5,S_6,S_7,S_8\}$. This already implies $k\geqslant 8$. Using equality~\eqref{dc}, we get
    \begin{align}\label{eq4}
        \sum_{a \in P} f(a) = \sum_{F \in \M/\sim}|p(F)| \geqslant \sum_{F \in \M/\sim}|F| + 3k \geqslant \sum_{F \in \M/\sim}|F| + 24.
    \end{align}

In the serpentaria $\mathcal S_1$ and $\mathcal S_2$, denote the sets of heads by $\mathcal H_1$ and $\mathcal H_2$, respectively, and also put $\mathcal H=\mathcal H_1\cup\mathcal H_2$.

Next we bound the sum $\sum_{a\in P}f(a)$. Let $P_1$ be the set of positions that are used in exactly one head from $\mathcal H$, and let $P_2$ be the set of positions that are used in at least two heads from $\mathcal H$.


\begin{lemma}\label{lemserpent}
    (1) For every position $a \in P_1$, inequality $f(a)\leqslant 3$ holds.

    (2) For every position $a \in P_2$, equality $f(a)=2$ holds.
\end{lemma}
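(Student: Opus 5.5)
Part (1) will follow directly from Lemma~\ref{lemsuphead}(1). A position $a\in P_1$ lies in some head $h\in\mathcal H$, and $h$ is the head of a snake $S_i$ with $i\in[8]$. Hence $f(a)\leqslant 3$ with no further work.

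For part (2), let $a\in P_2$. Then $a$ lies in two distinct heads $h,h'\in\mathcal H$, and the proof splits according to whether they come from the same serpentarium.

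\emph{Same serpentarium.} Suppose $h,h'\in\mathcal H_j$ for the same $j$. The two snakes are then compatible by the definition of a serpentarium, and both have cardinality at least $4$. Lemma~\ref{lemsuphead}(2) then gives $f(a)=2$ immediately.

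\emph{Different serpentaria.} Suppose $h\in\mathcal H_1$ and $h'\in\mathcal H_2$. Here compatibility is not given, so I would argue differently. The snake with head $h'$ is not in $\mathcal S_1$ and has at least $4$ elements, so Lemma~\ref{lemserp}(4) gives $(h',g)=0$ for every $g\in\mathcal H_1$. Applying Lemma~\ref{lemserp}(4) symmetrically gives $(h,g')=0$ for all $g'\in\mathcal H_2$.

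To show $f(a)=2$ in this case, I would use Lemma~\ref{lemserp}(2): since $a\in p(h)$, the opposite position $-a$ also lies in some head $\tilde h\in\mathcal H_1$. Now $(h',\tilde h)=0$ while $a\in p(h')$ and $-a\in p(\tilde h)$. This forces the supports of $h'$ and $\tilde h$ to share a second coordinate, on which $h'$ and $\tilde h$ have equal signs. I would then pass to the vectors $w=\tilde h+b$ of $S_{\tilde h}$ and $w'=h'+b'$ of $S_{h'}$. Taking the tails $b,b'$ outside all relevant supports (possible since the snakes have size at least $4$ and only finitely many coordinates must be avoided), we get $(w,w')=(\tilde h,h')=0$, which is allowed, so this alone gives no contradiction.

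The real approach will instead be the structural one. Lemma~\ref{lemserp}(3) says every $v$ outside $\mathcal S_1$ satisfies $(v,g)=0$ for all $g\in\mathcal H_1$, and likewise for $\mathcal S_2$. A class using $a$ that is not in $\mathcal S_1$ must have each of its vectors orthogonal to $h$. So every vector using $a$ and lying outside $\mathcal S_1\cup\mathcal S_2$ must be orthogonal to all eight heads, two of which contain $a$.

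The main obstacle is turning this orthogonality to many heads into a bound. My plan is to show that such a vector $v\ni a$ must also contain, for each head containing $a$, a compensating position $-c$ with $c$ in that head. Combining this with the zero-sum structure $\sum_{g\in\mathcal H_j}g=0$ should leave more than $4$ positions required in $v$. Alternatively, I would show directly that each serpentarium contributes at least two classes using $a$ (via Lemma~\ref{lemserp}(2) and the head counts), which gives $f(a)\geqslant 4$. That would contradict Lemma~\ref{lemheads}, which gives $f(a)\leqslant 3$ because $a$ is in a head, and so would rule out the mixed case, i.e.\ show that every $a\in P_2$ lies in two heads from the same serpentarium. I would first try this counting contradiction, since it avoids the delicate sign analysis.
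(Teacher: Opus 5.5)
Part (1) and the same-serpentarium case of part (2) are correct and match the paper. The gap is the mixed case, where $a$ lies in exactly one head of each serpentarium: $a\in p(h_1)$ with $h_1\in\mathcal H_1$, and $a\in p(h_5)$ with $h_5\in\mathcal H_2$.

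\textbf{The missing step in the mixed case.} You collect the right ingredients: cross-orthogonality of heads (Lemma~\ref{lemserp}(4)), a head containing $-a$ in each serpentarium (Lemma~\ref{lemserp}(2)), and orthogonality of outside vectors to all heads (Lemma~\ref{lemserp}(3)). But you never combine them. Your plan to show that $v$ needs ``more than $4$ positions'' is also not the right mechanism. One extra position may repair two heads at once, for example $-d$ with $d\in p(h_1)$ and $-d\in p(h_2)$, so a naive count proves nothing.

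The missing idea is a linear identity. Take $h_2\in\mathcal H_1$ and $h_6\in\mathcal H_2$ with $-a$ among their positions. Cross inner products vanish and $(h_1,h_2)=(h_5,h_6)=-1$. Hence $h_1+h_5-h_2-h_6$ has squared norm $12+2+2=16$, while its coordinate at $a$ already equals $4$. So $h_1+h_5-h_2-h_6=4a$. Every $v$ outside $\mathcal S_1\cup\mathcal S_2$ is orthogonal to all four heads, so $(v,a)=0$. Every position other than $\pm a$ is orthogonal to this combination, so extra positions can never help.

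You also skip the other half of $f(a)=2$: the six remaining snakes of the two serpentaria must not carry $a$ as a tail. If $v=h+a$ with $h\in\mathcal H_1\setminus\{h_1\}$, then Lemma~\ref{lemserp}(3),(4) give $0=(v,h_5)=(h,h_5)+(a,h_5)=0+1$, a contradiction. The case of $\mathcal H_2$ is symmetric.

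\textbf{The fallback is false.} The fallback you intend to try first is wrong, not merely incomplete. In the mixed case each serpentarium contributes exactly one class using $a$; that is what the argument above proves. The mixed case also genuinely occurs, so it cannot be ruled out.

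Take six coordinates with standard basis vectors $x_{jk}$, $1\leqslant j<k\leqslant 4$. Let $\mathcal H_1=\{g_1,\dots,g_4\}$ with $g_j=\sum_{k>j}x_{jk}-\sum_{k<j}x_{kj}$. Let $\mathcal H_2=\{x_{12}+x_{23}-x_{13},\ -x_{12}+x_{14}-x_{24},\ x_{13}+x_{34}-x_{14},\ x_{24}-x_{23}-x_{34}\}$. Inner products are $-1$ within each set and $0$ across. Let $e_i$ run over the other $n-6$ coordinates. The family $\{g+e_i: g\in\mathcal H_1\}\cup\{h-e_i: h\in\mathcal H_2\}$ satisfies the hypotheses, has $8n-48$ vectors, and contains two serpentaria. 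The position $x_{12}$ lies in exactly one head of each, namely $g_1$ and $x_{12}+x_{23}-x_{13}$, and $f(x_{12})=2$.

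So $f(a)\geqslant 4$ fails there. Lemma~\ref{lemheads} alone gives only $f(a)\leqslant 3$, which is too weak for the count based on $|P_1|+2|P_2|=24$ used afterwards.
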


\begin{proof}

    {\bf (1)} This follows from part (1) of Lemma~\ref{lemsuphead}.

    {\bf (2)} If the position $a$ is used in two heads from one serpentarium, then the statement follows from Lemma~\ref{lemsuphead}, since any two snakes of the same serpentarium are compatible. Now suppose that in each serpentarium there is exactly one head using $a$. Without loss of generality, $a\in p(h_1)$ and $a\in p(h_5)$, where $h_1$ and $h_5$ are the heads of the snakes $S_1$ and $S_5$, respectively.

    First note that no vector belonging to the other snakes of these two serpentaria can use the position $a$. Indeed, suppose, without loss of generality, that a vector $v$ belongs to a snake of the first serpentarium with head $h\ne h_1$ and $a\in p(v)$. Since $a\notin p(h)$, we have $v=h+a$. By Lemma~\ref{lemserp}, $(v,h_5)=0$ and $(h,h_5)=0$. Hence
    $$(v,h_5)=(h,h_5)+(a,h_5)=1,$$
    a contradiction.

    Now consider an arbitrary vector $v\in\M$ that does not belong to the snakes from $\mathcal S_1\cup\mathcal S_2$. Our goal is to show that $a \notin p(v)$.

    By part (2) of Lemma~\ref{lemserp}, in the first serpentarium there is a head $h_2$ such that $-a\in p(h_2)$, and in the second serpentarium there is a head $h_6$ such that $-a\in p(h_6)$. Consider the vector $h_1+h_5-h_2-h_6$. By Lemma~\ref{lemserp}, we know that heads from different serpentaria are orthogonal, and therefore
    $$(h_1+h_5-h_2-h_6, h_1+h_5-h_2-h_6)=4\cdot3-2(h_1,h_2)-2(h_5,h_6)=16.$$
    On the other hand, this vector has coordinate value 4 in the coordinate corresponding to $a$, hence $h_1+h_5-h_2-h_6=4a$.

    It remains to note that, by part (3) of Lemma~\ref{lemserp}, the vector $v$ is orthogonal to each of the vectors $h_1,h_2,h_5,h_6$, so
    $$(v,4a)=(v,h_1+h_5-h_2-h_6)=0.$$
    Hence $a\notin p(v)$. Thus, only the classes $S_1$ and $S_5$ use the position $a$. Therefore $f(a)=2$.

\end{proof}

Now we know that $p(\mathcal H_1 \sqcup \mathcal H_2)=P_1\sqcup P_2$. By double counting, $|P_1|+2|P_2|=8\cdot3=24$. Applying Lemma~\ref{lemserpent}, we obtain

\begin{align}\label{eq5}
    \sum_{a \in P} f(a) = \sum_{a \in P \setminus (P_1 \cup P_2)} f(a) + \sum_{a \in P_1} f(a) + \sum_{a \in P_2} f(a) \leqslant 4|P| - |P_1| - 2|P_2| = 8n-24.
\end{align}

Combining estimates~\eqref{eq4} and~\eqref{eq5}, we get

\[
|\M| = \sum_{F \in \M/\sim}|F| \leqslant \sum_{a \in P} f(a) - 24 \leqslant 8n-48.
\]
The upper bound in Theorem~\ref{bigtheorem} is proved.

\paragraph{Acknowledgements.}
The authors are grateful to A. M. Raigorodskii for productive discussions and to D. D. Cherkashin for useful comments.

\bibliographystyle{plain}
\bibliography{bibliography}

\end{document}